\numberwithin{equation}{section}
\title[Density of rational points on a certain bihomogeneous threefold]
{Density of rational points on a certain smooth bihomogeneous threefold}
\author{Pierre Le Boudec}
\subjclass{$11$D$45$, $14$G$05$}
\keywords{Bihomogeneous varieties, rational points, Manin's conjecture}
\address{Institute for Advanced Study \\ School of Mathematics \\ Einstein Drive \\
Simonyi Hall $-$ \text{Office $111$} \\ Princeton, NJ $08540$ \\ USA}
\email{pleboudec@ias.edu}
\begin{document}

\makeatletter
\def\imod#1{\allowbreak\mkern10mu({\operator@font mod}\,\,#1)}
\makeatother

\newtheorem{lemma}{Lemma}
\newtheorem{theorem}{Theorem}
\newtheorem{corollary}{Corollary}
\newtheorem{proposition}{Proposition}
\newtheorem{conjecture}{Conjecture}

\newcommand{\vol}{\operatorname{vol}}
\newcommand{\D}{\mathrm{d}}
\newcommand{\rank}{\operatorname{rank}}
\newcommand{\Pic}{\operatorname{Pic}}
\newcommand{\Gal}{\operatorname{Gal}}
\newcommand{\meas}{\operatorname{meas}}
\newcommand{\Spec}{\operatorname{Spec}}
\newcommand{\eff}{\operatorname{eff}}
\newcommand{\rad}{\operatorname{rad}}
\newcommand{\sq}{\operatorname{sq}}
\newcommand{\sign}{\operatorname{sign}}

\begin{abstract}
We establish sharp upper and lower bounds for the number of rational points of bounded anticanonical height on a smooth bihomogeneous
threefold defined over $\mathbb{Q}$ and of bidegree $(1,2)$. These bounds are in agreement with Manin's conjecture.
\end{abstract}

\maketitle

\tableofcontents

\section{Introduction}

Let $n \geq 2$ and $d \geq 1$ be two integers such that $n \geq d$. Let $V_{d}^n \subset \mathbb{P}^{n} \times \mathbb{P}^{n}$
be the smooth hypersurface defined over a number field $K$ by the equation
\begin{equation*}
x_0 y_0^d + \cdots + x_n y_n^d = 0,
\end{equation*}
where we use the notation $(\mathbf{x}, \mathbf{y}) = ((x_0 : \dots : x_n), (y_0 : \dots : y_n))$ to denote the coordinates in the
biprojective space $\mathbb{P}^{n} \times \mathbb{P}^{n}$.

The family of smooth bihomogeneous varieties $V_{d}^n$ is an excellent testing ground for the validity of Manin's conjecture on
the asymptotic behaviour of the number of rational points of bounded anticanonical height on Fano varieties (see \cite{MR974910}).
For instance, Batyrev and Tschinkel have provided a famous counterexample to this conjecture in the case $n = 3$, $d = 3$, and under
the assumption that $K$ contains a nontrivial cube root of unity.

From now on, we focus on the case $K = \mathbb{Q}$. We define the usual exponential height function
$H : \mathbb{P}^{n}(\mathbb{Q}) \to \mathbb{R}_{> 0}$ as follows. Given $\mathbf{z} \in \mathbb{P}^n(\mathbb{Q})$, we can choose
coordinates $(z_0 : \dots : z_n)$ satisfying $(z_0, \dots, z_n) \in \mathbb{Z}^{n+1}$ and $\gcd(z_0, \dots, z_n) = 1$, and then
we can set
\begin{equation*}
H(\mathbf{z}) = \max \{ |z_i|, i = 0, \dots, n \}.
\end{equation*}
With this in mind, we can define a height function
$\mathbf{H} : \mathbb{P}^{n}(\mathbb{Q}) \times \mathbb{P}^{n}(\mathbb{Q}) \to \mathbb{R}_{> 0}$ by setting
\begin{equation*}
\mathbf{H}(\mathbf{x}, \mathbf{y}) = H(\mathbf{x})^{n} H(\mathbf{y})^{n + 1 - d},
\end{equation*}
for $(\mathbf{x}, \mathbf{y}) \in \mathbb{P}^{n}(\mathbb{Q}) \times \mathbb{P}^{n}(\mathbb{Q})$. For any Zariski open subset
$U_{d}^n$ of $V_{d}^n$, we can introduce the number of rational points of bounded anticanonical height on $U_{d}^n$, that is
\begin{equation*}
N_{U_{d}^n, \mathbf{H}}(B) = \#
\{ (\mathbf{x}, \mathbf{y}) \in U_{d}^n(\mathbb{Q}), \mathbf{H}(\mathbf{x}, \mathbf{y}) \leq B \}.
\end{equation*}
In this setting, Manin's conjecture predicts that there should exist an open subset $U_{d}^n$ of $V_{d}^n$ such that
\begin{equation}
\label{Manin conjecture}
N_{U_{d}^n, \mathbf{H}}(B) = c B \log B (1 + o(1)),
\end{equation}
where $c > 0$ is a constant depending on $V_{d}^n$ and $\mathbf{H}$, and which is expected to obey Peyre's prediction
\cite{MR1340296}. As already mentioned, this conjecture is known not to hold in such generality.

The circle method is a traditional technique to count solutions to diophantine equations, and it has recently been applied by
Schindler \cite{Schindler, Schindler2} to count rational points on bihomogeneous varieties. However, this method is only
expected to yield a proof of Manin's conjecture for $V_{d}^n$ if $n$ is exponentially large in terms of~$d$. As a result, it is not
reasonable to ask for a proof of Manin's conjecture for any $d \geq 1$ and any $n \geq d + 1$. It is thus natural to start by
investigating the cases where $d$ is small. In particular, for fixed $d$, we would like to find out how small $n$ needs to be in
terms of $d$, to allow us to approach Manin's conjecture.

Let us note that no counterexample to Manin's conjecture is known if $d \leq 2$, and the conjecture is expected to hold for any
$n \geq 2$ in this case.

If $d = 1$ then the problem has been settled for any $n \geq 2$ by a great variety of techniques. First, the asymptotic formula
\eqref{Manin conjecture} follows from the result of Franke, Manin and Tschinkel \cite{MR974910} on flag varieties, which makes use of
the work of Langlands about the meromorphic continuation of Eisenstein series. Then, the result has also been obtained by Thunder
\cite{MR1237919} using the geometry of numbers. Finally, it has also been established using the circle method by Robbiani
\cite{MR1801715} for any $n \geq 3$, and more recently by Spencer \cite{MR2521490} for any $n \geq 2$.

The next case to study is $d = 2$. Here, the circle method is expected to establish the conjectured asymptotic formula
\eqref{Manin conjecture} provided that $n \geq 4$. The cases $n \in \{2, 3 \}$ are open and are known to be extremely hard problems.
The circle method might eventually succeed if $n = 3$ since this case seems to be at the border of the scope of the method. However,
the case $n = 2$ is far out of reach, and the aim of this article is to investigate what can be achieved in this case.

Unfortunately, we are unable to establish Manin's conjecture for $V_{2}^2$. However, we are able to prove upper and lower bounds
of the exact order of magnitude for $N_{U_{2}^2, \mathbf{H}}(B)$, where $U_{2}^2$ is the open subset defined by removing from
$V_{2}^2$ the subset given by $x_0 x_1 x_2 y_0 y_1 y_2 = 0$.

Our main result is the following.

\begin{theorem}
\label{Main Theorem}
We have the bounds
\begin{equation*}
B \log B \ll  N_{U_{2}^2, \mathbf{H}}(B) \ll B \log B.
\end{equation*}
\end{theorem}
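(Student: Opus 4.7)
The plan is to parametrize by $\mathbf{y}$ and count $\mathbf{x}$ as points of a rank-$2$ lattice. For any primitive representative $(y_0, y_1, y_2) \in \mathbb{Z}^3$ of $\mathbf{y} \in \mathbb{P}^2(\mathbb{Q})$ with $y_0 y_1 y_2 \neq 0$, the defining equation yields the saturated sublattice $\Lambda_\mathbf{y} = \{\mathbf{x} \in \mathbb{Z}^3 : y_0^2 x_0 + y_1^2 x_1 + y_2^2 x_2 = 0\}$, whose covolume is $\asymp H(\mathbf{y})^2$ and whose successive minima $\lambda_1 \leq \lambda_2$ in the $\ell^\infty$ norm satisfy $\lambda_1 \lambda_2 \asymp H(\mathbf{y})^2$ and $\lambda_2 \gg H(\mathbf{y})$. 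Set $T = (B/H(\mathbf{y}))^{1/2}$. A Minkowski-reduced basis analysis shows that the number of primitive vectors of $\Lambda_\mathbf{y}$ in $[-T, T]^3$ is $\asymp T^2/H(\mathbf{y})^2$ as soon as $T \geq \lambda_2$, is $O(1)$ when $\lambda_1 \leq T < \lambda_2$ (the lattice points in the box are then multiples of one primitive short vector, of which only $\pm 1$ remain primitive in $\mathbb{Z}^3$), and vanishes otherwise.

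Summing the dominant term $T^2/H(\mathbf{y})^2$ over all primitive $\mathbf{y}$ with $H(\mathbf{y}) \leq B$ produces
\[
B \sum_{\mathbf{y}} H(\mathbf{y})^{-3} \asymp B \log B,
\]
which both matches the conjectural growth rate and caps the contribution of the generic regime. For the lower bound I would restrict the same sum to the class of \emph{typical} $\mathbf{y}$ satisfying $\lambda_2(\mathbf{y}) \asymp H(\mathbf{y})$ and $H(\mathbf{y}) \leq B^{1/3}$, and argue that such $\mathbf{y}$ form a positive proportion of all primitive triples of each dyadic height $H(\mathbf{y}) \asymp Y_0$. Dyadic summation then delivers $N_{U_2^2, \mathbf{H}}(B) \gg B \log B$.

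To complete the upper bound I must control the contribution of \emph{exceptional} $\mathbf{y}$ with $\lambda_1(\mathbf{y}) \leq T < \lambda_2(\mathbf{y})$. Here the roles of $\mathbf{x}$ and $\mathbf{y}$ are swapped: each unique short vector $\mathbf{x}^*$ places $\mathbf{y}$, dually, on the smooth diagonal conic $C_{\mathbf{x}^*} : x_0^* Y_0^2 + x_1^* Y_1^2 + x_2^* Y_2^2 = 0$ under the constraint $\|\mathbf{x}^*\|_\infty^2 H(\mathbf{y}) \leq B$. A uniform upper bound of the shape
\[
\#\{\mathbf{y} \in C_{\mathbf{x}^*}(\mathbb{Q}) : H(\mathbf{y}) \leq Y_0\} \ll \frac{Y_0}{|x_0^* x_1^* x_2^*|^{1/2}},
\]
in the spirit of the standard conic counts of Heath-Brown and Browning, then yields a total exceptional contribution of order $B$, which is absorbed by the main term.

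The chief technical obstacle is securing this uniform conic bound in a form strong enough in its dependence on $\mathbf{x}^*$ to survive the double summation, and using the same estimate in the lower-bound step to certify that \emph{atypical} $\mathbf{y}$ (those with abnormally short vectors in $\Lambda_\mathbf{y}$) have vanishing proportional density at each dyadic scale of $H(\mathbf{y})$. The entire argument thus hinges on a sufficiently precise uniform estimate for primitive integer solutions to diagonal ternary quadratic equations.
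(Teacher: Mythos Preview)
Your high-level plan is close to the paper's: fix $\mathbf{y}$ and count $\mathbf{x}$ as primitive points of a rank-two lattice, then handle the leftover range by fixing $\mathbf{x}$ and counting $\mathbf{y}$ on the diagonal conic $C_{\mathbf{x}}$. The paper does exactly this, packaged into its Lemma~4, which combines Heath-Brown's lattice bound (Lemma~1) with Browning's conic bound (Lemma~2). So strategically you are on the right track.

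There is, however, a genuine gap in the treatment of the exceptional regime. The conic bound you invoke, $\#\{\mathbf{y}\in C_{\mathbf{x}^*}(\mathbb{Q}):H(\mathbf{y})\le Y_0\}\ll Y_0/|x_0^*x_1^*x_2^*|^{1/2}$, is false as stated: for any soluble conic it must carry an additive term of size $\tau(|x_0^*x_1^*x_2^*|)$ (or $2^{\omega}$ in the squarefree case), since there can be that many primitive solutions already near the Holzer height. The available uniform bound is of the shape $\bigl((Y_0^3/|x_0^*x_1^*x_2^*|)^{1/3}+1\bigr)\tau(|x_0^*x_1^*x_2^*|)$, and the `$+1$' term is not innocuous: summed trivially over all primitive $\mathbf{x}^*$ with $\|\mathbf{x}^*\|\le B^{1/3}$ it already produces $B(\log B)^3$, which swamps the main term. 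The paper deals with this by invoking a further, genuinely deep, input (its Lemma~3, due to Browning): among all pairwise-coprime triples $(u_0,u_1,u_2)$ in a box, those for which the conic $u_0v_0^2+u_1v_1^2+u_2v_2^2=0$ is soluble are sparse, with the weighted count $\sum 2^{\omega(u_0u_1u_2)}$ over soluble triples bounded by essentially the box volume. This is precisely what kills the `$+1$' contribution, and the paper flags it explicitly (``the summation over the fibers of $\varphi_1$ has to be carried out non-trivially because we need to take advantage of the fact that most diagonal conics do not have a rational point''). Your proposal does not mention this mechanism, and without it the exceptional sum does not close.

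On the lower bound: your plan to certify that a positive proportion of $\mathbf{y}$ at each dyadic scale have $\lambda_2(\mathbf{y})\asymp H(\mathbf{y})$ would again require controlling how many $\mathbf{y}$ lie on conics $C_{\mathbf{x}^*}$ with small $\mathbf{x}^*$, i.e.\ the same conic machinery. The paper avoids this entirely: it simply restricts to $H(\mathbf{y})\le B^{1/6}$ and, for each such $\mathbf{y}$, counts $\mathbf{x}$ by reading the equation as a congruence modulo $y_2^2$. This gives $\sim B/y_2^3$ solutions per $\mathbf{y}$ with an explicit error term, and the sum over $\mathbf{y}$ produces $\gg B\log B$ with no appeal to lattice minima or conic densities.
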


We note that this result is in agreement with Manin's prediction \eqref{Manin conjecture}. Therefore, it is likely to constitute
a first step in the direction of a proof of Manin's conjecture for $V_{2}^2$.

Let us give a sketch of the proof. In what follows, we denote by $\varphi_i : \mathbb{P}^2 \times \mathbb{P}^2 \to \mathbb{P}^2$,
$i \in \{1, 2 \}$, the two projections.

First, we note that proving the lower bound is not hard since it suffices to note that the contribution to
$N_{U_{2}^2, \mathbf{H}}(B)$ of the fibers of $\varphi_2$ corresponding to rational points $\mathbf{y} \in \mathbb{P}^2(\mathbb{Q})$
whose height is bounded by a small power of $B$ is of the expected order of magnitude. This is achieved in section \ref{lower bound}.

The proof of the upper bound is more intricate. It mainly relies on lemma \ref{geometry lemma 4} which gives an upper bound for the
number of solutions to a slightly more general equation than $x_0 y_0^2 + x_1 y_1^2 + x_2 y_2^2 = 0$. To prove this lemma,
we make use of both geometry of numbers and analytic number theory results.

More specifically, we get a first upper bound by estimating the number of $\mathbf{x} \in \mathbb{P}^2(\mathbb{Q})$ for fixed
$\mathbf{y} \in \mathbb{P}^2(\mathbb{Q})$ and by summing trivially over the fibers of $\varphi_2$. Similarly, we obtain a second upper
bound by estimating the number of $\mathbf{y} \in \mathbb{P}^2(\mathbb{Q})$ for fixed $\mathbf{x} \in \mathbb{P}^2(\mathbb{Q})$.
However, it is worth noticing that the summation over the fibers of $\varphi_1$ has to be carried out non-trivially because we
need to take advantage of the fact that most diagonal conics do not have a rational point. To complete the proof, it only remains
to minimize these two upper bounds, basically depending on the respective sizes of $\mathbf{x}$ and $\mathbf{y}$.

It is worth emphasizing the fact that the equation studied in lemma \ref{geometry lemma 4} shows up in various other settings. As a
consequence, lemma \ref{geometry lemma 4} is likely to be very useful in other situations. For instance, it plays a crucial role in
the work of the author \cite{Elliptic}, where it is proved that certain elliptic fibrations have linear growth, as predicted by
Manin's conjecture.

\subsection{Acknowledgements}

It is a pleasure for the author to thank Tim Browning for interesting explanations and comments on the results given in
section \ref{Geometry section}, and Damaris Schindler for kindly answering questions about her works, and for comments on an earlier
draft of this paper.

The financial support and the perfect working conditions provided by the Institute for Advanced Study are gratefully acknowledged.
This material is based upon work supported by the National Science Foundation under agreement No. DMS-$1128155$. Any opinions,
findings and conclusions or recommendations expressed in this material are those of the author and do not necessarily reflect
the views of the National Science Foundation.

\section{Geometry of numbers}

\label{Geometry section}

We now recall two lemmas which provide upper bounds for the number of solutions to certain homogeneous diagonal equations
in three variables and constrained in boxes. The first of these two lemmas deals with the case of a linear equation and is due to
Heath-Brown \cite[Lemma~$3$]{MR757475}.

\begin{lemma}
\label{geometry lemma 1}
Let $\mathbf{w} = (w_0,w_1,w_2) \in \mathbb{Z}^3$ be a primitive vector and let $U_i \geq 1$ for $i \in \{0, 1, 2 \}$. Let also
$N_{\mathbf{w}} = N_{\mathbf{w}}(U_0,U_1,U_2)$ be the number of primitive vectors $(u_0,u_1,u_2) \in \mathbb{Z}^3$
satisfying $|u_i| \leq U_i$ for $i \in \{ 0, 1, 2 \}$ and the equation
\begin{equation*}
u_0 w_0 + u_1 w_1 + u_2 w_2 = 0.
\end{equation*}
We have the bound
\begin{equation*}
N_{\mathbf{w}} \leq 12 \pi \frac{U_0 U_1 U_2}{\max \{ |w_i| U_i \} } + 4,
\end{equation*}
where the maximum is taken over $i \in \{ 0, 1, 2 \}$. In particular, if $\mathbf{w} \in \mathbb{Z}_{\neq 0}^3$ then
\begin{equation*}
N_{\mathbf{w}} \ll \frac{(U_0 U_1 U_2)^{2/3}}{ |w_0 w_1 w_2|^{1/3}} + 1.
\end{equation*}
\end{lemma}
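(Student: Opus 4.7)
The plan is to reduce the count to a two-dimensional lattice-point counting problem by eliminating one variable. By symmetry of the claimed inequality under permutations of the coordinates, I may assume that $|w_2| U_2 = \max_i |w_i| U_i$; primitivity of $\mathbf{w}$ then forces $w_2 \neq 0$. For any integer solution $(u_0, u_1, u_2)$ of the equation, the component $u_2 = -(u_0 w_0 + u_1 w_1)/w_2$ is determined by $(u_0, u_1)$, and its integrality is equivalent to the congruence $u_0 w_0 + u_1 w_1 \equiv 0 \pmod{w_2}$. Since $\gcd(w_0, w_1, w_2) = 1$, the reduction map $\mathbb{Z}^2 \to \mathbb{Z}/w_2\mathbb{Z}$ sending $(u_0, u_1)$ to $u_0 w_0 + u_1 w_1 \bmod w_2$ is surjective, so its kernel
\begin{equation*}
\Lambda = \{ (u_0, u_1) \in \mathbb{Z}^2 : u_0 w_0 + u_1 w_1 \equiv 0 \pmod{w_2} \}
\end{equation*}
is a sublattice of $\mathbb{Z}^2$ of determinant exactly $|w_2|$. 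Since primitive solutions are a fortiori integer solutions, the problem reduces to bounding the number of $\Lambda$-points in the symmetric convex region
\begin{equation*}
R = \{ (u_0, u_1) \in \mathbb{R}^2 : |u_0| \leq U_0,\ |u_1| \leq U_1,\ |u_0 w_0 + u_1 w_1| \leq |w_2| U_2 \},
\end{equation*}
the third slab encoding $|u_2| \leq U_2$.

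The inequalities $|w_0| U_0 \leq |w_2| U_2$ and $|w_1| U_1 \leq |w_2| U_2$ imply that the third slab only truncates the corners of the rectangle $[-U_0, U_0] \times [-U_1, U_1]$, so $\vol(R) \leq 4 U_0 U_1$. I would then apply a standard lattice-point counting estimate of the form $\#(\Lambda \cap R) \leq c\, \vol(R)/\det(\Lambda) + c'$ with absolute constants $c, c'$, derived via Minkowski's second theorem by passing to a basis of $\Lambda$ realising the successive minima with respect to $R$ and bounding the number of integer linear combinations that land in $R$. Combining this with the area bound yields
\begin{equation*}
\#(\Lambda \cap R) \leq \frac{4c\, U_0 U_1}{|w_2|} + c' = \frac{4c\, U_0 U_1 U_2}{\max_i |w_i| U_i} + c',
\end{equation*}
which has the shape of the claimed inequality.

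The ``In particular'' statement follows at once from the AM-GM inequality $\max_i |w_i| U_i \geq (|w_0 w_1 w_2|\, U_0 U_1 U_2)^{1/3}$, which is valid as soon as all $w_i$ are nonzero. The main obstacle is extracting the sharp constants $12\pi$ and $4$ rather than generic absolute ones: a crude application of successive minima already recovers the correct order of magnitude, but pinning down the coefficient $12\pi$ requires a careful geometric analysis of the (hexagonal) shape of $R$ combined with an optimisation over the two successive minima of $\Lambda$ with respect to $R$, while the additive $4$ absorbs the boundary contributions in the regime where $R$ contains very few lattice points.
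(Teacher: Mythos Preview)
The paper does not prove this lemma; it simply quotes it from Heath-Brown \cite[Lemma~3]{MR757475}. Your outline is in fact the standard route, and is essentially Heath-Brown's: project onto the $(u_0,u_1)$-plane after choosing the index realising $\max_i |w_i|U_i$, identify the image with a sublattice $\Lambda\subset\mathbb Z^2$ of determinant $|w_2|$, and count lattice points in a symmetric convex region. The deduction of the ``in particular'' clause via $\max_i|w_i|U_i\ge(|w_0w_1w_2|\,U_0U_1U_2)^{1/3}$ is also correct.

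There is, however, a genuine gap. The ``standard lattice-point counting estimate'' you invoke,
\[
\#(\Lambda\cap R)\;\leq\; c\,\frac{\vol(R)}{\det(\Lambda)}+c',
\]
is \emph{false} for arbitrary two-dimensional lattices and symmetric convex bodies: take $\Lambda=\mathbb Z^2$ and $R=[-N,N]\times[-\tfrac{1}{2N},\tfrac{1}{2N}]$, where $\vol(R)=2$ but $\#(\Lambda\cap R)=2N+1$. What Minkowski's second theorem actually yields, after passing to a basis adapted to the successive minima $\lambda_1\le\lambda_2$ of $\Lambda$ with respect to $R$, is a bound of the shape $(1+1/\lambda_1)(1+1/\lambda_2)$, and the dangerous regime is $\lambda_1<1\le\lambda_2$, in which $R$ may contain arbitrarily many $\Lambda$-points, all lying on a single line through the origin. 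This is precisely where the primitivity hypothesis is needed, and you discarded it too early when you wrote ``primitive solutions are a fortiori integer solutions.'' If $\lambda_2>1$ then every $\Lambda$-point in $R$ is an integer multiple of the shortest vector; since $u_2$ is a linear function of $(u_0,u_1)$, the corresponding triples $(u_0,u_1,u_2)$ are all proportional, and at most two of them are primitive. This case split is the true source of the additive constant, not ``boundary contributions.'' Once it is inserted, the remaining case $\lambda_2\le 1$ genuinely gives $\#(\Lambda\cap R)\ll 1/(\lambda_1\lambda_2)\asymp\vol(R)/\det(\Lambda)$, and your argument goes through with some absolute constants in place of $12\pi$ and $4$.
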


The second lemma is concerned with the case of a quadratic equation and is a particular case of a result of Browning
\cite[Lemma $4$.$10$]{MR2559866}. Let us mention that due to a subtle oversight in the proof of \cite[Lemma $4$.$9$]{MR2559866},
one should replace the arithmetic function $2^{\omega}$ by $\tau$ in the statement of \cite[Lemma $4$.$10$]{MR2559866} (see the
recent result of Browning and Swarbrick Jones \cite[Theorem $5$]{SJ}).

\begin{lemma}
\label{geometry lemma 2}
Let $\mathbf{u} = (u_0,u_1,u_2) \in \mathbb{Z}_{\neq 0}^3$ be a vector satisfying the conditions $\gcd(u_i,u_j) = 1$ for
$i,j \in \{0, 1, 2 \}$, $i \neq j$, and let $V_i \geq 1$ for $i \in \{0, 1, 2 \}$. Let also
$N_{\mathbf{u}} = N_{\mathbf{u}}(V_0,V_1,V_2)$ be the number of primitive vectors $(v_0,v_1,v_2) \in \mathbb{Z}^3$ satisfying
$|v_i| \leq V_i$ for $i \in \{ 0, 1, 2 \}$ and the equation
\begin{equation*}
u_0 v_0^2 + u_1 v_1^2 + u_2 v_2^2 = 0.
\end{equation*}
We have the bound
\begin{equation*}
N_{\mathbf{u}} \ll \left( \frac{V_0 V_1 V_2}{ |u_0 u_1 u_2| } + 1 \right)^{1/3} \tau(|u_0 u_1 u_2|).
\end{equation*}
\end{lemma}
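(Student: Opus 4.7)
The plan is to parametrize primitive integer points on the conic by a quadratic map from $\mathbb{P}^1$ and then to bound the resulting lattice point count in the $(s,t)$-plane using geometry of numbers and the elementary fact that the minimum of three positive reals is at most their geometric mean.

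First, I would assume that the conic $C : u_0 v_0^2 + u_1 v_1^2 + u_2 v_2^2 = 0$ carries at least one primitive integer point $\mathbf{v}^{(0)}$, for otherwise $N_{\mathbf{u}} = 0$ and the stated bound is trivial. Fixing such a $\mathbf{v}^{(0)}$, projection from $\mathbf{v}^{(0)}$ provides a rational parametrization $\mathbb{P}^1 \to C$ of the form $(s,t) \mapsto (f_0(s,t), f_1(s,t), f_2(s,t))$, where each $f_i$ is an integer binary quadratic form. A direct computation, substituting the parametrization into the equation of $C$ and using the identity at $\mathbf{v}^{(0)}$, shows that the discriminant of $f_i$ has the form $-c_i u_j u_k$ with $\{i,j,k\} = \{0,1,2\}$, where $c_i$ is controlled by the chosen basepoint. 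The pairwise coprimality hypothesis on the $u_i$ guarantees that the intermediate gcds appearing during the descent are divisors of $|u_0 u_1 u_2|$.

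Next, the box condition $|v_i| \leq V_i$ translates to $|f_i(s,t)| \leq \delta V_i$, where $\delta$ is the gcd of the image (a divisor of $|u_0 u_1 u_2|$). For a single binary quadratic form of discriminant $\Delta$, the region $|f| \leq X$ has Lebesgue measure $O(X/\sqrt{|\Delta|})$, so each individual constraint restricts $(s,t)$ to a planar region of area $O(\delta V_i/\sqrt{|u_j u_k|})$. Their intersection $R$ thus has area at most the minimum of these three quantities, and the inequality $\min(a,b,c) \leq (abc)^{1/3}$ yields
\begin{equation*}
\vol(R) \ll \delta \left( \frac{V_0 V_1 V_2}{|u_0 u_1 u_2|} \right)^{1/3}.
\end{equation*}

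Finally, a standard primitive lattice point count produces $O(\vol(R) + 1)$ valid parameters $(s,t)$ per choice of $\delta$ and per descent branch; summing over the $O(\tau(|u_0 u_1 u_2|))$ admissible values of $\delta$ and absorbing the $+1$ into the cube root via $a^{1/3}+1 \ll (a+1)^{1/3}$ delivers the claimed bound. \textbf{The main obstacle} lies precisely in this last bookkeeping: different primitive $(s,t)$ can yield the same primitive $\mathbf{v}$, and several descent branches must be combined; one has to show that the total multiplicity is bounded by $\tau(|u_0 u_1 u_2|)$ rather than only $2^{\omega(|u_0 u_1 u_2|)}$, which is exactly the delicate point addressed by the correction of Browning and Swarbrick Jones noted in the text.
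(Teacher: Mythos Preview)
The paper does not prove this lemma; it is quoted as a special case of Lemma~4.10 in Browning's monograph, with the correction of Browning and Swarbrick Jones replacing $2^{\omega}$ by $\tau$. Your high-level plan --- parametrize primitive integer points on the conic by binary quadratic forms in $(s,t)$ and then count parameters --- is exactly the framework of that source, and your closing remark about the $\tau$ versus $2^{\omega}$ bookkeeping correctly locates the point where the cited correction enters.

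However, the area step as written has a genuine gap. The claim that $\{(s,t):|f(s,t)|\leq X\}$ has Lebesgue measure $O(X/\sqrt{|\Delta|})$ holds only when $f$ is definite; for an indefinite form the region has infinite area. Since the $u_i$ cannot all share the same sign if the conic has a real point, exactly one of the three parametrizing forms $f_i$ is definite and the other two are indefinite. Thus in your chain $\operatorname{vol}(R)\leq \min_i(\text{area}_i)\leq (\prod_i \text{area}_i)^{1/3}$, two of the three factors on the right are $+\infty$ and the min--geometric-mean trick yields nothing beyond the single finite bound $\delta V_{i_0}/\sqrt{|u_{j_0}u_{k_0}|}$. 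This is in general strictly weaker than $\delta(V_0V_1V_2/|u_0u_1u_2|)^{1/3}$: already for $u_0=1$, $u_1=u_2=-1$ and $V_1=V_2=1$, the Pythagorean parametrization has $f_0=s^2+t^2$ definite, and your bound reads $O(V_0)$ rather than $O(V_0^{1/3})$. The argument in the cited source does not bound the three sublevel sets separately; it first takes a basepoint of minimal height (controlled by Cassels' theorem), so that the coefficients of the $f_i$ are themselves bounded in terms of the $u_i$, and then feeds all three constraints simultaneously into a successive-minima estimate for the lattice of parameters. Only after that step is in place does the divisor sum over $\delta$ produce the factor $\tau(|u_0u_1u_2|)$.
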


We also need to consider how often a diagonal quadratic equation has a non-trivial integral solution. For this, we recall the
following lemma, which is a particular case of the nice result of Browning \cite[Proposition $1$]{MR2250046}. Let us note that this
result is deep and builds upon several powerful analytic number theory tools.

\begin{lemma}
\label{geometry lemma 3}
Let $\mathbf{f} = (f_0,f_1,f_2) \in \mathbb{Z}_{\neq 0}^3$ be a primitive vector and let $U_i \geq 1$ for $i \in \{ 0, 1, 2 \}$.
Let also $\mathcal{T}_{\mathbf{f}}(U_0, U_1, U_2)$ be the set of $\mathbf{u} = (u_0,u_1,u_2) \in \mathbb{Z}_{\neq 0}^3$ satisfying
$|u_i| \leq U_i$ for $i \in \{ 0, 1, 2 \}$, and $\gcd(u_i, u_j) = 1$ for $i, j \in \{0, 1, 2 \}$, $i \neq j$, and such that the
equation
\begin{equation*}
f_0 u_0 v_0^2 + f_1 u_1 v_1^2 + f_2 u_2 v_2^2 = 0,
\end{equation*}
has a solution $(v_0,v_1,v_2) \in \mathbb{Z}_{\neq 0}^3$ with $\gcd(v_i, v_j) = 1$ for $i, j \in \{0, 1, 2 \}$, $i \neq j$.
Let $\varepsilon > 0$ be fixed. We have the bound
\begin{equation*}
\sum_{\mathbf{u} \in \mathcal{T}_{\mathbf{f}}(U_0, U_1, U_2)} 2^{\omega(u_0 u_1 u_2)} \ll
|f_0 f_1 f_2|^{\varepsilon} U_0 U_1 U_2 M_{\varepsilon}(U_0, U_1, U_2),
\end{equation*}
where
\begin{equation*}
M_{\varepsilon}(U_0, U_1, U_2) = 1 + \max_{\{ i, j, k \} = \{ 0, 1, 2 \} } (U_i U_j)^{-1/2 + \varepsilon} \log 2U_k.
\end{equation*}
\end{lemma}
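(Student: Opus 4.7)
Since Lemma \ref{geometry lemma 3} is stated as a particular case of Browning's Proposition~1 in \cite{MR2250046}, my first plan is to deduce it directly from that result. Browning treats sums of this shape for general diagonal ternary quadratic forms; here one restricts to the twist of the fixed form $f_0 X_0^2 + f_1 X_1^2 + f_2 X_2^2$ by a pairwise coprime triple $(u_0, u_1, u_2)$, and the only additional bookkeeping is to absorb the dependence on the fixed $f_i$'s into the factor $|f_0 f_1 f_2|^\varepsilon$. This reduction should be essentially formal.

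It is nevertheless worth recording the underlying analytic strategy. By Legendre's theorem on ternary conics, the equation $f_0 u_0 v_0^2 + f_1 u_1 v_1^2 + f_2 u_2 v_2^2 = 0$ has a nontrivial primitive integer solution if and only if the $f_i u_i$ do not all share the same sign and, for each odd prime $p$ dividing $f_i u_i$, a certain quadratic residue condition on the product of the two other coefficients holds (together with mild $2$-adic conditions). Since the $u_i$ are pairwise coprime, one can essentially reduce to the case of squarefree $u_i$, and the residue conditions split as Jacobi symbols of the form $\left(\frac{-f_j u_j f_k u_k}{p}\right) = 1$ for each $p \mid u_i$.

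The weight $2^{\omega(u_0 u_1 u_2)}$ is then natural: using multiplicative characters to detect each residue condition produces a sum of $2^{\omega(u_0 u_1 u_2)}$ character sums, and one bounds the original sum by counting each admissible $\mathbf{u}$ with multiplicity $2^{\omega(u_0 u_1 u_2)}$. Quadratic reciprocity converts the conditions on the primes dividing each $u_i$ into characters evaluated at the other two variables $u_j, u_k$, producing a bilinear (in fact trilinear) character sum in $\mathbf{u}$. One applies the large sieve inequality to one of the variables, say $u_i$, at a time: the diagonal contribution yields the main term $U_0 U_1 U_2$, while the off-diagonal contribution produces the error $(U_j U_k)^{-1/2+\varepsilon} U_0 U_1 U_2 \log 2U_i$ visible in $M_\varepsilon(U_0, U_1, U_2)$. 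The contribution from the primes dividing $f_0 f_1 f_2$ and from the sign conditions at infinity is harmless and gets swept into $|f_0 f_1 f_2|^\varepsilon$.

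The hard part is not the use of reciprocity or the large sieve per se, but rather ensuring uniformity when the $U_i$ are of very different sizes, which is exactly what $M_\varepsilon$ encodes. In particular, when one of the $U_i$ is small the large sieve must be applied to the short variable last, and one needs to separate it from the long variables by a dyadic decomposition before invoking the sieve. A secondary subtlety is the contribution of squareful $u_i$: a preliminary factorisation $u_i = r_i s_i^2$ with $r_i$ squarefree is needed to apply Legendre cleanly, and the sum over $s_i$ must be controlled so that it does not inflate the bound. Both of these technicalities are treated in \cite{MR2250046}, and for the present application it suffices to cite Browning's result.
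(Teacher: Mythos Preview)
Your proposal is correct and matches the paper's approach exactly: the paper does not prove Lemma~\ref{geometry lemma 3} but simply records it as a particular case of Browning's \cite[Proposition~1]{MR2250046}, noting only that the result is deep and relies on powerful analytic tools. Your additional sketch of the underlying analytic strategy (Legendre, quadratic reciprocity, large sieve, and the role of $M_\varepsilon$ in handling lopsided ranges) is accurate supplementary commentary, but the paper itself gives no such outline.
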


These three lemmas together allow us to prove a sharp upper bound for the number of solutions
$(\mathbf{u}, \mathbf{v}) \in \mathbb{Z}_{\neq 0}^3 \times \mathbb{Z}_{\neq 0}^3$ to the equation of lemma \ref{geometry lemma 3}
and constrained in boxes. More precisely, we establish the following lemma, which is the key result in the proof of the
upper bound in Theorem \ref{Main Theorem}.

\begin{lemma}
\label{geometry lemma 4}
Let $\mathbf{f} = (f_0,f_1,f_2) \in \mathbb{Z}_{\neq 0}^3$ be a vector satisfying the conditions $\gcd(f_i, f_j) = 1$ for
$i, j \in \{0, 1, 2 \}$, $i \neq j$, and let $U_i, V_i \geq 1$ for $i \in \{ 0, 1, 2 \}$. Let also
$N_{\mathbf{f}} = N_{\mathbf{f}}(U_0,U_1,U_2,V_0,V_1,V_2)$ be the number of vectors $(u_0,u_1,u_2) \in \mathbb{Z}_{\neq 0}^3$ and
$(v_0,v_1,v_2) \in \mathbb{Z}_{\neq 0}^3$ satisfying $|u_i| \leq U_i$, $|v_i| \leq V_i$ for $i \in \{ 0, 1, 2 \}$, and the equation
\begin{equation*}
f_0 u_0 v_0^2 + f_1 u_1 v_1^2 + f_2 u_2 v_2^2 = 0,
\end{equation*}
and such that $\gcd(u_i v_i, u_j v_j) = 1$ for $i, j \in \{0, 1, 2 \}$, $i \neq j$. Let $\varepsilon > 0$ be fixed and recall the
definition of $M_{\varepsilon}(U_0, U_1, U_2)$ given in lemma \ref{geometry lemma 3}. We have the bound
\begin{equation*}
N_{\mathbf{f}} \ll |f_0 f_1 f_2|^{\varepsilon} (U_0 U_1 U_2)^{2/3} (V_0 V_1 V_2)^{1/3} M_{\varepsilon}(U_0, U_1, U_2).
\end{equation*}
\end{lemma}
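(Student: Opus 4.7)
The plan is to estimate $N_{\mathbf{f}}$ by combining the three preceding lemmas in two complementary ways --- once by fixing $\mathbf{u}$ and counting the admissible $\mathbf{v}$ (\textbf{Approach A}), once by fixing $\mathbf{v}$ and counting the admissible $\mathbf{u}$ (\textbf{Approach B}) --- and then to take whichever estimate is sharper according to the relative size of $U_0 U_1 U_2$ and $V_0 V_1 V_2$.

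For \textbf{Approach A}, I first note that $\gcd(u_i v_i, u_j v_j) = 1$ forces $\gcd(u_i, u_j) = 1$, so any $\mathbf{u}$ contributing to $N_{\mathbf{f}}$ lies in the set $\mathcal{T}_{\mathbf{f}}(U_0, U_1, U_2)$ of Lemma~\ref{geometry lemma 3}. Fixing such a $\mathbf{u}$, one would like to apply Lemma~\ref{geometry lemma 2} to the diagonal conic in $\mathbf{v}$ with coefficients $(f_0 u_0, f_1 u_1, f_2 u_2)$; since $\gcd(f_i u_i, f_j u_j) = \gcd(f_i, u_j) \gcd(u_i, f_j)$ divides $|f_0 f_1 f_2|$, one first factors these common parts out of each $u_i$ and sums over the boundedly many possible divisor choices at the cost of a factor $|f_0 f_1 f_2|^{O(\varepsilon)}$, reducing to the pairwise coprime case. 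Since pairwise coprime $\mathbf{v}$ is contained in primitive $\mathbf{v}$, Lemma~\ref{geometry lemma 2} then yields
\begin{equation*}
\#\{\mathbf{v}\} \ll |f_0 f_1 f_2|^{\varepsilon} \tau(|u_0 u_1 u_2|) \left( \left( \frac{V_0 V_1 V_2}{|f_0 f_1 f_2 u_0 u_1 u_2|} \right)^{1/3} + 1 \right).
\end{equation*}
Summing over $\mathbf{u} \in \mathcal{T}_{\mathbf{f}}$, the main piece becomes $(V_0 V_1 V_2)^{1/3} |f_0 f_1 f_2|^{-1/3} \sum_{\mathbf{u} \in \mathcal{T}_{\mathbf{f}}} \tau(|u_0 u_1 u_2|) / |u_0 u_1 u_2|^{1/3}$, which I would estimate via a dyadic decomposition on the sizes $|u_i|$ combined with Lemma~\ref{geometry lemma 3} on each dyadic box (using that $\tau$ and $2^{\omega}$ are comparable in average over pairwise coprime triples); this produces the target $|f_0 f_1 f_2|^{\varepsilon} (U_0 U_1 U_2)^{2/3} (V_0 V_1 V_2)^{1/3} M_{\varepsilon}$. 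The residual $+1$ piece contributes $\ll |f_0 f_1 f_2|^{\varepsilon} U_0 U_1 U_2 M_{\varepsilon}$ and is dominated by the target whenever $V_0 V_1 V_2 \geq U_0 U_1 U_2$.

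For \textbf{Approach B}, the pairwise coprimality of $\mathbf{f}$ and of $\mathbf{v}$ (which follows from $\gcd(u_i v_i, u_j v_j) = 1$) guarantees that $(f_0 v_0^2, f_1 v_1^2, f_2 v_2^2)$ is a primitive vector in $\mathbb{Z}_{\neq 0}^3$, so Lemma~\ref{geometry lemma 1} applied to the linear equation $\sum_i (f_i v_i^2) u_i = 0$ gives $\#\{\mathbf{u}\} \ll (U_0 U_1 U_2)^{2/3} / (|f_0 f_1 f_2|^{1/3} |v_0 v_1 v_2|^{2/3}) + 1$ for each fixed $\mathbf{v}$. Summing over $\mathbf{v}$ with $|v_i| \leq V_i$ and using $\sum_{1 \leq v \leq V} v^{-2/3} \ll V^{1/3}$ yields
\begin{equation*}
N_{\mathbf{f}} \ll \frac{(U_0 U_1 U_2)^{2/3} (V_0 V_1 V_2)^{1/3}}{|f_0 f_1 f_2|^{1/3}} + V_0 V_1 V_2,
\end{equation*}
whose second term is dominated by the target precisely when $V_0 V_1 V_2 \leq U_0 U_1 U_2$. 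Combining the two approaches according to the regime $V_0 V_1 V_2 \lessgtr U_0 U_1 U_2$ completes the proof.

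The main obstacle, in my view, is the dyadic bookkeeping in Approach A: one has to handle with care the discrepancy between the $\tau$ function appearing in Lemma~\ref{geometry lemma 2} and the $2^{\omega}$ function appearing in Lemma~\ref{geometry lemma 3}, and to ensure that the accumulated logarithmic losses from the dyadic sum fit into the allowed $|f_0 f_1 f_2|^{\varepsilon}$ and $M_{\varepsilon}$ factors without degrading the exponent $(U_0 U_1 U_2)^{2/3}$. A squarefree-part decomposition $u_i = s_i t_i$ (with $s_i$ squarefree and $t_i$ powerful), or equivalently a version of Lemma~\ref{geometry lemma 3} proved directly with $\tau$ in place of $2^{\omega}$, should resolve this.
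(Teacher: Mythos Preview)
Your proposal is correct and follows essentially the same two-pronged strategy as the paper's proof, including the squarefree decomposition $u_i = z_i^2 \ell_i$ to bridge the $\tau$/$2^{\omega}$ gap (the paper uses partial summation rather than a dyadic sum, but this is immaterial). One simplification you missed: the equation together with $\gcd(f_i,f_j)=\gcd(u_iv_i,u_jv_j)=1$ already forces $\gcd(f_iu_i,f_ju_j)=1$ (if $p\mid f_i$ and $p\mid u_j$ then the equation gives $p\mid f_k u_k v_k^2$, and the coprimality hypotheses rule out each factor), so your factoring workaround in Approach~A is unnecessary and Lemma~\ref{geometry lemma 2} applies directly.
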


\begin{proof}
First, let us fix $(v_0,v_1,v_2) \in \mathbb{Z}_{\neq 0}^3$ and let us start by bounding the number of
$(u_0,u_1,u_2) \in \mathbb{Z}_{\neq 0}^3$ satisfying the conditions stated in the lemma. Since
$\gcd(f_0 v_0^2, f_1 v_1^2, f_2 v_2^2) = 1$, lemma~\ref{geometry lemma 1} gives
\begin{equation*}
N_{\mathbf{f}} \ll \sum_{\substack{|v_i| \leq V_i \\ i \in \{ 0, 1, 2 \}}}
\left( \frac1{|f_0 f_1 f_2|^{1/3}} \frac{(U_0 U_1 U_2)^{2/3}}{|v_0 v_1 v_2|^{2/3}} + 1 \right).
\end{equation*}
In particular, this gives us a first upper bound
\begin{equation}
\label{bound 1}
N_{\mathbf{f}} \ll (U_0 U_1 U_2)^{2/3} (V_0 V_1 V_2)^{1/3} + V_0 V_1 V_2.
\end{equation}
In a similar fashion, let us fix $(u_0,u_1,u_2) \in \mathbb{Z}_{\neq 0}^3$ and let us start by bounding the number of
$(v_0,v_1,v_2) \in \mathbb{Z}_{\neq 0}^3$ satisfying the conditions stated in the lemma. The equation
\begin{equation*}
f_0 u_0 v_0^2 + f_1 u_1 v_1^2 + f_2 u_2 v_2^2 = 0,
\end{equation*}
and the coprimality conditions $\gcd(f_i, f_j) = \gcd(u_i v_i, u_j v_j) = 1$ for $i, j \in \{0, 1, 2 \}$, $i \neq j$, imply
that $\gcd(f_i u_i, f_j u_j) = 1$ for $i, j \in \{0, 1, 2 \}$, $i \neq j$. We can thus apply lemma \ref{geometry lemma 2}. Recalling
the notation introduced in lemma~\ref{geometry lemma 3}, we obtain
\begin{equation*}
N_{\mathbf{f}} \ll \sum_{\mathbf{u} \in \mathcal{T}_{\mathbf{f}}(U_0, U_1, U_2)}
\left( \frac1{|f_0 f_1 f_2|^{1/3}} \frac{(V_0 V_1 V_2)^{1/3}}{|u_0 u_1 u_2|^{1/3}} + 1 \right) \tau(|f_0 f_1 f_2 u_0 u_1 u_2|),
\end{equation*}
This implies in particular that
\begin{equation*}
N_{\mathbf{f}} \ll |f_0 f_1 f_2|^{\varepsilon}
\sum_{\mathbf{u} \in \mathcal{T}_{\mathbf{f}}(U_0, U_1, U_2)}
\left( \frac{(V_0 V_1 V_2)^{1/3}}{|u_0 u_1 u_2|^{1/3}} + 1 \right) \tau(|u_0 u_1 u_2|).
\end{equation*}
Let us write $u_i = z_i^2 \ell_i$ with $z_i \in \mathbb{Z}_{>0}$ and $|\mu(|\ell_i|)| = 1$ for $i \in \{0, 1, 2 \}$, and let
us set $\mathbf{l} = (\ell_0, \ell_1, \ell_2)$, $\mathbf{g} = (f_0 z_0^2, f_1 z_1^2,f_2 z_2^2)$ and $L_i = U_i/z_i^2$ for
$i \in \{0, 1, 2 \}$. We have
\begin{equation*}
N_{\mathbf{f}} \ll |f_0 f_1 f_2|^{\varepsilon} \! \sum_{\substack{z_i \leq U_i^{1/2} \\ i \in \{ 0, 1, 2 \}}}
|z_0 z_1 z_2|^{\varepsilon} \!
\sum_{\substack{\mathbf{l} \in \mathcal{T}_{\mathbf{g}}(L_0, L_1, L_2) \\ |\mu(|\ell_0 \ell_1 \ell_2|)| = 1}}
\left( \frac{(V_0 V_1 V_2)^{1/3}}{(z_0 z_1 z_2)^{2/3} |\ell_0 \ell_1 \ell_2|^{1/3}} + 1 \right) 2^{\omega(|\ell_0 \ell_1 \ell_2|)}.
\end{equation*}
Note that we have used the fact that $\ell_0 \ell_1 \ell_2$ is squarefree to replace the arithmetic function $\tau$ by $2^{\omega}$.
Let $\varepsilon > 0$ be fixed. We note that $\mathbf{g}$ is primitive so we can use lemma \ref{geometry lemma 3}. Thus, applying
partial summation and lemma~\ref{geometry lemma 3}, we get
\begin{equation*}
N_{\mathbf{f}} \ll |f_0 f_1 f_2|^{2 \varepsilon} \sum_{\substack{z_i \leq U_i^{1/2} \\ i \in \{ 0, 1, 2 \}}}
\frac{(U_0 U_1 U_2)^{2/3} (V_0 V_1 V_2)^{1/3} + U_0 U_1 U_2}{(z_0 z_1 z_2)^{2 - 3 \varepsilon}} M_{\varepsilon}(L_0, L_1, L_2).
\end{equation*}
This finally gives us a second upper bound
\begin{equation}
\label{bound 2}
N_{\mathbf{f}} \ll |f_0 f_1 f_2|^{2 \varepsilon} \left( (U_0 U_1 U_2)^{2/3} (V_0 V_1 V_2)^{1/3} + U_0 U_1 U_2 \right)
M_{2 \varepsilon}(U_0, U_1, U_2).
\end{equation}
As a result, putting together the upper bounds \eqref{bound 1} and \eqref{bound 2}, we find in particular that
\begin{equation*}
N_{\mathbf{f}} \ll |f_0 f_1 f_2|^{\varepsilon}
\left( (U_0 U_1 U_2)^{2/3} (V_0 V_1 V_2)^{1/3} + \min \{ U_0 U_1 U_2, V_0 V_1 V_2 \} \right) M_{\varepsilon}(U_0, U_1, U_2).
\end{equation*}
The simple observation that
\begin{equation*}
\min \{ U_0 U_1 U_2, V_0 V_1 V_2 \} \leq (U_0 U_1 U_2)^{2/3} (V_0 V_1 V_2)^{1/3},
\end{equation*}
completes the proof.
\end{proof}

\section{The lower bound}

\label{lower bound}

This section is devoted to the proof of the lower bound in Theorem \ref{Main Theorem}. As stated in the introduction,
the proof merely draws upon the fact that the contribution to $N_{U_{2}^2, \mathbf{H}}(B)$ of the
$\mathbf{y} \in \mathbb{P}^2(\mathbb{Q})$ whose height is bounded by a small power of $B$ is already of the expected order of
magnitude.

By definition of $N_{U_{2}^2, \mathbf{H}}(B)$, we have
\begin{equation*}
N_{U_{2}^2, \mathbf{H}}(B) = 2 \# \left\{ (\mathbf{x}, \mathbf{y}) \in \mathbb{Z}_{\neq 0}^3 \times \mathbb{Z}_{> 0}^3,
\begin{array}{l}
x_0 y_0^2 + x_1 y_1^2 + x_2 y_2^2 = 0 \\
\gcd(x_0, x_1, x_2) = \gcd(y_0, y_1, y_2) = 1 \\
\max_{i, j \in \{0, 1, 2 \}} x_i^2 y_j \leq B
\end{array}
\right\},
\end{equation*}
so that
\begin{equation*}
N_{U_{2}^2, \mathbf{H}}(B) \geq 12
\sum_{\substack{\mathbf{y} \in \mathbb{Z}_{> 0}^3 \\ \gcd(y_0, y_1, y_2) = 1 \\ y_0 < y_1 < y_2 \leq B^{1/6}}}
\# \left\{ \mathbf{x} \in \mathbb{Z}_{\neq 0}^3,
\begin{array}{l}
x_0 y_0^2 + x_1 y_1^2 + x_2 y_2^2 = 0 \\
\gcd(x_0, x_1, x_2) = 1 \\
\max_{i \in \{0, 1, 2 \}} x_i^2 y_2 \leq B
\end{array}
\right\}.
\end{equation*}
The condition $\gcd(y_0, y_2) = 1$ will be easier to handle than $\gcd(y_0, y_1, y_2) = 1$, so it is convenient to note that we also
have
\begin{equation*}
N_{U_{2}^2, \mathbf{H}}(B) \geq 12
\sum_{\substack{\mathbf{y} \in \mathbb{Z}_{> 0}^3 \\ \gcd(y_0, y_2) = 1 \\ y_0 < y_1 < y_2 \leq B^{1/6}}}
\# \left\{ \mathbf{x} \in \mathbb{Z}_{\neq 0}^3,
\begin{array}{l}
x_0 y_0^2 + x_1 y_1^2 + x_2 y_2^2 = 0 \\
\gcd(x_0, x_1, x_2) = 1 \\
\max_{i \in \{0, 1, 2 \}} x_i^2 y_2 \leq B
\end{array}
\right\}.
\end{equation*}
Since the condition $\max_{i \in \{0, 1 \}} x_i^2 y_2 \leq B/4$ and the equation $x_0 y_0^2 + x_1 y_1^2 + x_2 y_2^2 = 0$ imply that
$\max_{i \in \{0, 1, 2 \}} x_i^2 y_2 \leq B$, we have
\begin{equation*}
N_{U_{2}^2, \mathbf{H}}(B) \geq 12
\sum_{\substack{\mathbf{y} \in \mathbb{Z}_{> 0}^3 \\ \gcd(y_0, y_2) = 1 \\ y_0 < y_1 < y_2 \leq B^{1/6}}}
\# \left\{ \mathbf{x} \in \mathbb{Z}_{\neq 0}^3,
\begin{array}{l}
x_0 y_0^2 + x_1 y_1^2 + x_2 y_2^2 = 0 \\
\gcd(x_0, x_1, x_2) = 1 \\
\max_{i \in \{0, 1 \}} x_i^2 y_2 \leq B/4
\end{array}
\right\}.
\end{equation*}
We can now remove the coprimality condition $\gcd(x_0, x_1, x_2) = 1$ using a Möbius inversion. We get
\begin{equation}
\label{intermediate lower bound}
N_{U_{2}^2, \mathbf{H}}(B) \geq 12
\sum_{\substack{\mathbf{y} \in \mathbb{Z}_{> 0}^3 \\ \gcd(y_0, y_2) = 1 \\ y_0 < y_1 < y_2 \leq B^{1/6}}}
\sum_{k \leq B^{1/2}} \mu(k) \mathcal{S}_k(\mathbf{y}; B),
\end{equation}
where
\begin{equation*}
\mathcal{S}_k(\mathbf{y}; B) =
\# \left\{ \mathbf{x}' \in \mathbb{Z}_{\neq 0}^3,
\begin{array}{l}
x_0' y_0^2 + x_1' y_1^2 + x_2' y_2^2 = 0 \\
\max_{i \in \{0, 1 \}} x_i'^2 y_2 \leq B/4k^2
\end{array}
\right\},
\end{equation*}
and where we have used the obvious notation $\mathbf{x}' = (x_0', x_1', x_2')$. We now observe that
\begin{equation*}
\mathcal{S}_k(\mathbf{y}; B) =
\# \left\{ (x_0', x_1') \in \mathbb{Z}_{\neq 0}^2,
\begin{array}{l}
x_0' y_0^2  + x_1' y_1^2 = 0 \imod{y_2^2} \\
\max_{i \in \{0, 1 \}} x_i'^2 y_2 \leq B/4k^2
\end{array}
\right\}.
\end{equation*}
Since $\gcd(y_0, y_2) = 1$, $y_0$ is invertible modulo $y_2^2$. Using the notation $y_0^{-1}$ to denote the inverse of $y_0$ modulo
$y_2^2$, we have
\begin{align*}
\mathcal{S}_k(\mathbf{y}; B) & = \sum_{\substack{x_1' \in \mathbb{Z}_{\neq 0} \\ x_1'^2 y_2 \leq B/ 4 k^2}}
\# \left\{ x_0' \in \mathbb{Z}_{\neq 0},
\begin{array}{l}
x_0' = - y_0^{-2} x_1' y_1^2 \imod{y_2^2} \\
x_0'^2 y_2 \leq B/4k^2
\end{array}
\right\} \\
& = \sum_{\substack{x_1' \in \mathbb{Z}_{\neq 0} \\ x_1'^2 y_2 \leq B/ 4 k^2}}
\left( \frac{B^{1/2}}{k y_2^{5/2}} + O (1) \right) \\
& = \frac{B}{k^2 y_2^3} + O \left( \frac{B^{1/2}}{k y_2^{1/2}} \right).
\end{align*}
Recalling the lower bound \eqref{intermediate lower bound}, we see that we have obtained
\begin{equation*}
N_{U_{2}^2, \mathbf{H}}(B) \geq 12
\sum_{\substack{\mathbf{y} \in \mathbb{Z}_{> 0}^3 \\ \gcd(y_0, y_2) = 1 \\ y_0 < y_1 < y_2 \leq B^{1/6}}}
\sum_{k \leq B^{1/2}} \mu(k) \left( \frac{B}{k^2 y_2^3} + O \left( \frac{B^{1/2}}{k y_2^{1/2}} \right) \right).
\end{equation*}
This eventually gives
\begin{equation*}
N_{U_{2}^2, \mathbf{H}}(B) \geq \frac{B \log B}{\zeta(2)^2} + O(B),
\end{equation*}
which completes the proof of the lower bound in Theorem \ref{Main Theorem}.

\section{The upper bound}

\label{upper bound}

This section is concerned with establishing the upper bound in Theorem \ref{Main Theorem}. As already explained in the introduction,
the proof draws upon lemma \ref{geometry lemma 4}.

\subsection{Parametrization of the variables}

The following lemma provides us with a convenient parametrization of the rational points on $U_{2}^2$.

\begin{lemma}
\label{parametrization}
Let $\mathcal{T}(B)$ be the number of
$(f_0, f_1, f_2, g_0, g_1, g_2, h_0, h_1, h_2) \in \mathbb{Z}_{> 0}^9$ and $(u_0, u_1, u_2, v_0, v_1, v_2) \in \mathbb{Z}_{\neq 0}^6$
satisfying the equation
\begin{equation*}
f_0 u_0 v_0^2 + f_1 u_1 v_1^2 + f_2 u_2 v_2^2 = 0,
\end{equation*}
and the conditions $\gcd(f_i, f_j g_j h_i u_i v_j) = \gcd(g_i, g_j h_i u_i v_i v_j) = \gcd(h_i, h_j v_i) = 1$ and
$\gcd(u_i, u_j) = \gcd(v_i, v_j) = 1$ for $i, j \in \{0, 1, 2 \}$, $i \neq j$, and the height conditions
\begin{equation*}
\left( \max_{ \{ i, j, k \} =  \{ 0, 1, 2 \}} f_j f_k g_j^2 g_k^2 h_i^2 |u_i| \right)^2
\left( \max_{ \{ i, j, k \} =  \{ 0, 1, 2 \}} f_i g_i h_j h_k |v_i| \right) \leq B.
\end{equation*}
We have the equality
\begin{equation*}
N_{U_{2}^2, \mathbf{H}}(B) = \frac1{4} \mathcal{T}(B).
\end{equation*}
\end{lemma}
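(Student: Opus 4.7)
The plan is to construct an explicit bijection, via the identities
\begin{equation*}
x_i = f_j f_k g_j^2 g_k^2 h_i^2 u_i, \qquad y_i = f_i g_i h_j h_k v_i, \qquad \{i,j,k\} = \{0,1,2\},
\end{equation*}
between tuples counted by $\mathcal{T}(B)$ and primitive integer representatives of the rational points counted by $N_{U_{2}^2, \mathbf{H}}(B)$. Since $U_{2}^2$ is defined by the nonvanishing of all coordinates, each rational point has exactly four primitive integer representatives $(\pm \mathbf{x}, \pm \mathbf{y}) \in \mathbb{Z}_{\neq 0}^3 \times \mathbb{Z}_{\neq 0}^3$ satisfying $\gcd(x_0,x_1,x_2) = \gcd(y_0,y_1,y_2) = 1$. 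The signs of $u_i,v_i$ are inherited from those of $x_i,y_i$ while $f_i,g_i,h_i > 0$, so these four lifts produce four distinct tuples, accounting for the prefactor $1/4$.

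I would construct the factorization from $(\mathbf{x}, \mathbf{y})$ prime by prime. Fix a prime $p$ and write $a_i = v_p(x_i)$, $b_i = v_p(y_i)$. Coprimality yields $\min_i a_i = \min_i b_i = 0$, while the equation forces $\min_i(a_i + 2 b_i)$ to be attained at least twice. These two constraints rule out many valuation patterns and, case by case, provide a unique decomposition
\begin{equation*}
a_i = \sum_{l \neq i}(\alpha_l + 2\beta_l) + 2\gamma_i + \mu_i, \qquad b_i = \alpha_i + \beta_i + \sum_{l \neq i} \gamma_l + \nu_i,
\end{equation*}
in nonnegative integers in which each of the triples $(\alpha_l)_l, (\beta_l)_l, (\gamma_l)_l, (\mu_l)_l, (\nu_l)_l$ has at most one nonzero entry and the remaining prime-local disjointness constraints from the lemma hold. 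Setting $v_p(f_i) = \alpha_i$, $v_p(g_i) = \beta_i$, $v_p(h_i) = \gamma_i$, $v_p(u_i) = \mu_i$, $v_p(v_i) = \nu_i$ then globalizes the factorization.

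With the prime-local decomposition in hand, three routine checks finish the proof: substituting into $x_0 y_0^2 + x_1 y_1^2 + x_2 y_2^2 = 0$ produces the common factor $f_0 f_1 f_2 g_0^2 g_1^2 g_2^2 h_0^2 h_1^2 h_2^2$, and cancellation yields $f_0 u_0 v_0^2 + f_1 u_1 v_1^2 + f_2 u_2 v_2^2 = 0$; the prime-local disjointness conditions translate into the global coprimality conditions displayed in the lemma; and the height condition rewrites into the claimed expression because $|x_i| = f_j f_k g_j^2 g_k^2 h_i^2 |u_i|$ and $|y_i| = f_i g_i h_j h_k |v_i|$, so $\max_i |x_i|^2 \max_j |y_j|$ matches $\max_i x_i^2 y_j \leq B$ up to the original unordered maximum over the six pairs.

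The main obstacle is the prime-local case analysis, that is, establishing existence and uniqueness of the decomposition for every admissible pattern $(a_i, b_i)$. The key arithmetic input is the observation that if a prime $p$ divides both $y_j$ and $y_k$ while $p \nmid y_i$, then the equation forces $v_p(x_i) = v_p(x_j y_j^2 + x_k y_k^2) \geq 2 \min(v_p(y_j), v_p(y_k))$, which produces the expected $h_i^2$-contribution; a symmetric observation handles primes simultaneously dividing the squared parts of several $x_i$'s, splitting their excess power into the $f$ and $g$ families according to whether the corresponding $y_i$ receives one or at least two copies of $p$. Verifying that these forced divisibilities are exhausted by precisely the three families $f$, $g$, $h$ and that the residual valuations distribute uniquely among $\mu_i, \nu_i$ subject to $\gcd(u_i, u_j) = \gcd(v_i, v_j) = 1$ is what closes the argument.
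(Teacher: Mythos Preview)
Your proposal is correct and lands on exactly the same parametrization as the paper, with the same identities $x_i = f_j f_k g_j^{2} g_k^{2} h_i^{2} u_i$ and $y_i = f_i g_i h_j h_k v_i$ and the same reason for the factor $1/4$. The difference is purely in execution. You propose to build the decomposition prime by prime via a case analysis on the valuation pattern $(a_i,b_i)$, and you correctly identify that case analysis as the main obstacle. The paper avoids that obstacle entirely by working globally with a short chain of gcd extractions: first set $h_i = \gcd(y_j,y_k)$ and write $y_i = h_j h_k y_i'$, observe from the equation that $h_i^{2} \mid x_i$ so $x_i = h_i^{2} x_i'$; then set $X_i = \gcd(x_j',x_k')$ and write $x_i' = X_j X_k u_i$, observe from the equation that $X_i \mid y_i'^{2}$; finally split $X_i = f_i g_i^{2}$ and $y_i' = f_i g_i v_i$ uniquely with $\gcd(g_i,v_i)=1$. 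Each step is forced, so existence and uniqueness come for free without ever enumerating valuation patterns. Your prime-local argument would certainly go through, but the paper's sequential gcd route is shorter and sidesteps precisely the part you flagged as hardest; on the other hand, your formulation makes the bijectivity and the translation of the coprimality and height conditions more explicit than the paper, which simply asserts that ``it is not hard to check'' them.
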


\begin{proof}
We have
\begin{equation*}
N_{U_{2}^2, \mathbf{H}}(B) = \frac1{4} \# \left\{ (\mathbf{x}, \mathbf{y}) \in \mathbb{Z}_{\neq 0}^3 \times \mathbb{Z}_{\neq 0}^3,
\begin{array}{l}
x_0 y_0^2 + x_1 y_1^2 + x_2 y_2^2 = 0 \\
\gcd(x_0, x_1, x_2) = \gcd(y_0, y_1, y_2) = 1 \\
\max_{i, j \in \{0, 1, 2 \}} x_i^2 |y_j| \leq B
\end{array}
\right\}.
\end{equation*}
For $\{ i, j, k \} =  \{ 0, 1, 2 \}$, let us set $h_i = \gcd(y_j,y_k)$ and let us write $y_i = h_j h_k y_i'$. The equation
\begin{equation*}
x_0 h_1^2 h_2^2 y_0'^2 + x_1 h_0^2 h_2^2 y_1'^2 + x_2 h_0^2 h_1^2 y_2'^2 = 0,
\end{equation*}
implies that for $i \in \{0, 1, 2 \}$, we have $h_i^2 \mid x_i$ so that we can write $x_i = h_i^2 x_i'$. We thus get the equation
\begin{equation*}
x_0' y_0'^2 + x_1' y_1'^2 + x_2' y_2'^2 = 0.
\end{equation*}
For $\{ i, j, k \} =  \{ 0, 1, 2 \}$, let us set $X_i = \gcd(x_j',x_k')$ and let us write $x_i' = X_j X_k u_i$. We get
\begin{equation*}
X_1 X_2 u_0 y_0'^2 + X_0 X_2 u_1 y_1'^2 + X_0 X_1 u_2 y_2'^2 = 0,
\end{equation*}
so that, for $i \in \{0, 1, 2 \}$, we have $X_i \mid y_i'^2$. As a consequence, for $i \in \{0, 1, 2 \}$, there is a unique way to write
$X_i = f_i g_i^2$ and $y_i' = f_i g_i v_i$ for $f_i, g_i \in \mathbb{Z}_{> 0}$ with $\gcd(g_i, v_i) = 1$. Therefore, we obtain the
equation
\begin{equation*}
f_0 u_0 v_0^2 + f_1 u_1 v_1^2 + f_2 u_2 v_2^2 = 0,
\end{equation*}
and it is not hard to check that the variables satisfy the coprimality conditions listed in the statement of the lemma, which
completes the proof.
\end{proof}

\subsection{Proof of the upper bound}

First, we note that the coprimality conditions $\gcd(f_i, v_j) = \gcd(u_i, u_j) = \gcd(v_i, v_j) = 1$ for $i, j \in \{0, 1, 2 \}$,
$i \neq j$, and the equation
\begin{equation*}
f_0 u_0 v_0^2 + f_1 u_1 v_1^2 + f_2 u_2 v_2^2 = 0,
\end{equation*}
imply that we actually have
$\gcd(u_i v_i, u_j v_j) = 1$ for $i, j \in \{0, 1, 2 \}$, $i \neq j$.

For $i \in \{ 0, 1, 2 \}$, let $F_i, G_i, H_i, U_i, V_i \geq 1$ run over powers of $2$ and let $\mathcal{M}$ be the number of
$(f_0, f_1, f_2, g_0, g_1, g_2, h_0, h_1, h_2) \in \mathbb{Z}_{> 0}^9$ and $(u_0, u_1, u_2, v_0, v_1, v_2) \in \mathbb{Z}_{\neq 0}^6$
satisfying the equation
\begin{equation*}
f_0 u_0 v_0^2 + f_1 u_1 v_1^2 + f_2 u_2 v_2^2 = 0,
\end{equation*}
the conditions $F_i < f_i \leq 2 F_i$, $G_i < g_i \leq 2 G_i$, $H_i < h_i \leq 2 H_i$, $U_i < |u_i| \leq 2 U_i$ and
$V_i < |v_i| \leq 2 V_i$, and $\gcd(f_i, f_j) = \gcd(u_i v_i, u_j v_j) = 1$ for $i, j \in \{ 0, 1, 2 \}$, $i \neq j$.
By lemma \ref{parametrization}, we have
\begin{equation*}
N_{U_{2}^2, \mathbf{H}}(B) \ll \sum_{\substack{F_i, G_i, H_i, U_i, V_i \\ i \in \{ 0, 1, 2 \}}} \mathcal{M},
\end{equation*}
where the sum is taken over the $F_i, G_i, H_i, U_i, V_i$, $i \in \{ 0, 1, 2 \}$, satisfying
\begin{equation}
\label{height conditions}
\left( \max_{ \{ i, j, k \} =  \{ 0, 1, 2 \}} F_j F_k G_j^2 G_k^2 H_i^2 U_i \right)^2
\left( \max_{ \{ i, j, k \} =  \{ 0, 1, 2 \}} F_i G_i H_j H_k V_i \right) \leq B.
\end{equation}
By choosing $\varepsilon = 1/6$ in lemma \ref{geometry lemma 4}, we get
\begin{equation*}
\mathcal{M} \ll (F_0 F_1 F_2)^{7/6} G_0 G_1 G_2 H_0 H_1 H_2 (U_0 U_1 U_2)^{2/3} (V_0 V_1 V_2)^{1/3}
M_{1/6}(U_0, U_1, U_2).
\end{equation*}
Recalling the definition of $M_{1/6}(U_0, U_1, U_2)$ given in lemma \ref{geometry lemma 3}, we define
\begin{equation*}
\mathcal{M}_1 = (F_0 F_1 F_2)^{7/6} G_0 G_1 G_2 H_0 H_1 H_2 (U_0 U_1 U_2)^{2/3} (V_0 V_1 V_2)^{1/3},
\end{equation*}
and
\begin{align*}
\mathcal{M}_2 = & \ (\log B) (F_0 F_1 F_2)^{7/6} G_0 G_1 G_2 H_0 H_1 H_2 (U_0 U_1 U_2)^{2/3} (V_0 V_1 V_2)^{1/3} \\
& \times \left( \min_{i, j \in \{ 0, 1, 2 \}, i \neq j} U_i U_j \right)^{-1/3},
\end{align*}
and also
\begin{equation*}
\mathcal{N}_{\ell}(B) = \sum_{\substack{F_i, G_i, H_i, U_i, V_i \\ i \in \{ 0, 1, 2 \}}} \mathcal{M}_{\ell},
\end{equation*}
for $\ell \in \{ 1, 2 \}$, and where the sum is taken over the $F_i, G_i, H_i, U_i, V_i$, $i \in \{ 0, 1, 2 \}$, satisfying the
conditions \eqref{height conditions}. We thus have
\begin{equation}
\label{inequality N}
N_{U_{2}^2, \mathbf{H}}(B) \ll \mathcal{N}_1(B) + \mathcal{N}_2(B).
\end{equation}
Let us start by taking care of $\mathcal{N}_1(B)$. For this, let us sum over $V_0$, $V_1$ and $V_2$ using the conditions
\eqref{height conditions}. We get
\begin{align*}
\mathcal{N}_1(B) \ll & \ B \sum_{\substack{F_i, G_i, H_i, U_i \\ i \in \{ 0, 1, 2 \}}}
(F_0 F_1 F_2)^{5/6} (G_0 G_1 G_2)^{2/3} (H_0 H_1 H_2)^{1/3} (U_0 U_1 U_2)^{2/3} \\
& \times \left( \max_{ \{ i, j, k \} =  \{ 0, 1, 2 \}} F_j F_k G_j^2 G_k^2 H_i^2 U_i \right)^{-2}.
\end{align*}
By symmetry, we can assume that
\begin{equation}
\label{height conditions U}
\max \left\{ F_0 F_2 G_0^2 G_2^2 H_1^2 U_1, F_0 F_1 G_0^2 G_1^2 H_2^2 U_2 \right\} \leq F_1 F_2 G_1^2 G_2^2 H_0^2 U_0.
\end{equation}
Let us sum over $U_1$ and $U_2$ using the inequalities \eqref{height conditions U}. We obtain
\begin{equation*}
\mathcal{N}_1(B) \ll B \sum_{\substack{F_i, G_i, H_i, U_0 \\ i \in \{ 0, 1, 2 \}}}
(F_0 F_1 F_2)^{-1/2} (G_0 G_1 G_2)^{-2} (H_0 H_1 H_2)^{-1},
\end{equation*}
which finally gives
\begin{equation}
\label{N1}
\mathcal{N}_1(B) \ll B \log B.
\end{equation}
Let us now deal with $\mathcal{N}_2(B)$. We can assume by symmetry that
\begin{equation*}
\min_{i, j \in \{ 0, 1, 2 \}, i \neq j} U_i U_j = U_1 U_2.
\end{equation*}
We thus have
\begin{align*}
\mathcal{M}_2 \ll & \ (\log B) (F_0 F_1 F_2)^{7/6} G_0 G_1 G_2 H_0 H_1 H_2 U_0^{2/3} (U_1 U_2)^{1/3}
(V_0 V_1 V_2)^{1/3}.
\end{align*}
Once again, let us sum over $V_0$, $V_1$ and $V_2$ using the conditions \eqref{height conditions}. We find that
\begin{align*}
\mathcal{N}_2(B) \ll & \ B (\log B) \sum_{\substack{F_i, G_i, H_i, U_i \\ i \in \{ 0, 1, 2 \}}}
(F_0 F_1 F_2)^{5/6} (G_0 G_1 G_2)^{2/3} (H_0 H_1 H_2)^{1/3} U_0^{2/3} (U_1 U_2)^{1/3} \\
& \times \left( \max_{ \{ i, j, k \} =  \{ 0, 1, 2 \}} F_j F_k G_j^2 G_k^2 H_i^2 U_i \right)^{-2}.
\end{align*}
Now, let us use the inequality
\begin{equation*}
\left( \max_{ \{ i, j, k \} =  \{ 0, 1, 2 \}} F_j F_k G_j^2 G_k^2 H_i^2 U_i \right)^{2} \geq
F_0 (F_1 F_2)^{3/2} G_0^2 (G_1 G_2)^{3} H_0^2 H_1 H_2 U_0 (U_1 U_2)^{1/2}.
\end{equation*}
This gives us
\begin{equation*}
\mathcal{N}_2(B) \ll B (\log B) \sum_{\substack{F_i, G_i, H_i, U_i \\ i \in \{ 0, 1, 2 \}}}
\left( F_0 (F_1 F_2)^{4} G_0^{8} (G_1 G_2)^{14} H_0^{10} (H_1 H_2)^4 U_0^2 U_1 U_2 \right)^{-1/6},
\end{equation*}
and therefore, we obtain
\begin{equation}
\label{N2}
\mathcal{N}_2(B) \ll B \log B.
\end{equation}
Putting together the three upper bounds \eqref{inequality N}, \eqref{N1} and \eqref{N2} completes the proof of the upper bound
in Theorem \ref{Main Theorem}.

\bibliographystyle{amsalpha}
\bibliography{biblio}

\end{document}